\documentclass[11pt]{amsart}

\usepackage{amssymb,amsmath,amscd}
\usepackage{hyperref,MnSymbol,enumitem}

\newcommand{\A}{\mathbb{A}}
\newcommand{\Q}{\mathbf{Q}}

\renewcommand{\P}{\mathfrak{P}}
\newcommand{\p}{\mathfrak{p}}
\newcommand{\q}{\mathfrak{q}}

\newcommand{\calE}{\mathcal E}
\newcommand{\calF}{\mathcal F}
\newcommand{\calO}{\mathcal O}
\newcommand{\calX}{\mathcal X}

\renewcommand{\to}{\rightarrow}

\newcommand{\disc}{\operatorname{disc}}

\newcommand{\Gal}{\operatorname{Gal}}

\newcommand{\Spec}{\operatorname{Spec}}

\newtheorem{thm}{Theorem}[section]
\newtheorem{lem}[thm]{Lemma}
\newtheorem{prop}[thm]{Proposition}
\newtheorem{cor}[thm]{Corollary}

\theoremstyle{definition}

\newtheorem{alg}[thm]{Algorithm}

\numberwithin{equation}{section}

\title[explicit hilbert irreducibility]{explicit hilbert irreducibility}

\author{David Krumm}
\address{Department of Mathematics and Statistics\\
Colby College}
\email{dkrumm@colby.edu}
\urladdr{http://personal.colby.edu/\textasciitilde dkrumm/}

\begin{document}

\maketitle

\begin{abstract}
Let $P(T,X)$ be an irreducible polynomial in two variables with rational coefficients. It follows from Hilbert's Irreducibility Theorem that for most rational numbers $t$ the specialized polynomial $P(t,X)$ is irreducible and has the same Galois group as $P$. We discuss here a method for obtaining an explicit description of the set of exceptional numbers $t$, i.e., those for which $P(t,X)$ is either reducible or has a different Galois group than $P$. To illustrate the method we determine the exceptional specializations of two polynomials of degrees four and six.
\end{abstract}

\section{Introduction}

Let $P\in \Q[T,X]$ be an irreducible polynomial in two variables with rational coefficients. Regarding $P$ as an element of the ring $\Q(T)[X]$, let $G$ be the Galois group of $P$, i.e., the Galois group of a splitting field for $P$ over $\Q(T)$. For any rational number $t$ we may consider the specialized polynomial $P_t=P(t,X)\in\Q[X]$ and its Galois group, which we denote by $G_t$. The Hilbert Irreducibility Theorem (henceforth abbreviated HIT) implies that as $t$ varies over all rational numbers, most specializations $P_t$ remain irreducible and have Galois group isomorphic to $G$. However, there may exist rational numbers $t$ for which either $P_t$ is reducible or $G_t$ is not isomorphic to $G$; we will call the set of all such numbers the \textit{exceptional set} of $P$, denoted $\calE(P)$. The main purpose of this article is to develop a method for obtaining an explicit description of this exceptional set.

A standard step\footnote{See Lang \cite[Chap. 9, \S 1]{lang_diophantine} or Serre \cite[\S 3.3]{serre_topics}.} in the proof of HIT is to show that there exist a finite set $D\subset\Q$ and algebraic curves $C_1,\ldots, C_r$ having the following property: if $t\in\Q\setminus D$ is such that either $P_t$ is reducible or $G_t$ is not isomorphic to $G$, then $t$ is a coordinate of a rational point on one of the curves $C_i$ (or more generally, $t$ is in the image of a map $C_i\to\mathbf P^1$). Our approach to obtaining an explicit description of the set $\calE(P)$ is based on a method for finding such a set $D$ and curves $C_1,\ldots, C_r$. Theorem \ref{main_hit_thm_intro} below, which was motivated by Serre's treatment of HIT in \cite[\S 3.3]{serre_topics} and by results of D\`ebes-Walkowiak \cite[\S3.1]{debes-walkowiak}, allows us to reduce the problem of finding both a set $D$ and defining equations for curves $C_1,\ldots, C_r$ to problems in computational group theory and Galois theory. 

\begin{thm}\label{main_hit_thm_intro}
Let $\Delta(T)$ and $\ell(T)$ be the discriminant and leading coefficient of $P$, respectively. Let $M_1,\ldots, M_r$ be representatives of all the conjugacy classes of maximal subgroups of $G$. For $i=1,\ldots, r$, let $F_i$ be the fixed field of $M_i$ and let $f_i(T,X)$ be a monic irreducible polynomial in $\Q[T][X]$ such that $F_i/\Q(T)$ is generated by a root of $f_i(T,X)$. Suppose that $t\in \Q$ satisfies
\begin{equation}\label{intro_exceptional_set_eq}
\Delta(t)\cdot\ell(t)\cdot\prod_{i=1}^r\disc f_i(t,X)\ne 0.
\end{equation}
Then $t\in\calE(P)$ $\iff$ there is an index $i$ such that $f_i(t,X)$ has a root in $\Q$.
\end{thm}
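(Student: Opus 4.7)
The plan is to realize $G_t$ as a decomposition group inside $G$ via specialization of $L/\Q(T)$, rephrase the condition $t\in\calE(P)$ as the statement that this decomposition group is a proper subgroup of $G$, and then use the polynomials $f_i$ to detect containment in a conjugate of a maximal subgroup.

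To set up, I would fix a splitting field $L$ of $P$ over $\Q(T)$, so that $G=\Gal(L/\Q(T))$; let $A=\Q[T]_{(T-t)}$, let $B$ be the integral closure of $A$ in $L$, and choose a prime $\p$ of $B$ above $(T-t)A$ with decomposition group $D_\p\subseteq G$. The standard specialization argument (as in Serre, \S3.3) will show that the hypothesis $\ell(t)\Delta(t)\ne 0$ makes $\p$ unramified and forces $B/\p$ to be a splitting field of $P_t$ over $\Q$, so that $G_t\cong D_\p$ and $(B/\p)^{D_\p}=\Q$. Granted this, one observes that $t\in\calE(P)$ if and only if $D_\p\subsetneq G$: if $D_\p=G$, the permutation groups agree and transitivity of $G$ gives irreducibility of $P_t$; if $D_\p\subsetneq G$, then either $D_\p$ acts intransitively on the roots of $P$ (making $P_t$ reducible) or it is a proper transitive subgroup (so $G_t$ differs from $G$ as a permutation group).

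Next I would invoke the group theory: every proper subgroup of $G$ lies in some maximal subgroup, and each maximal subgroup is conjugate to at least one of $M_1,\ldots,M_r$, so $D_\p\subsetneq G$ iff $D_\p\subseteq gM_ig^{-1}$ for some $i$ and some $g\in G$. To connect this with $f_i(t,X)$, pick a root $\beta_i\in L$ of $f_i$ whose $G$-stabilizer equals $M_i$; since $f_i$ is monic, $\beta_i\in B$, and the $G$-conjugates of $\beta_i$ are precisely the $[G:M_i]$ roots of $f_i$ in $L$. Their reductions modulo $\p$ are roots of $f_i(t,X)$, and the hypothesis $\disc f_i(t,X)\ne 0$ guarantees that this reduction is injective on the set of conjugates and therefore exhausts the roots of $f_i(t,X)$ in $B/\p$. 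The pivotal chain of equivalences is then
\[
D_\p\subseteq gM_ig^{-1}=\operatorname{Stab}_G(g\beta_i)\iff D_\p\text{ fixes }g\beta_i\iff D_\p\text{ fixes }\overline{g\beta_i}\iff \overline{g\beta_i}\in\Q,
\]
where the middle step uses injectivity of reduction on conjugates of $\beta_i$, and the last uses $(B/\p)^{D_\p}=\Q$. Ranging over $g$, this says that $D_\p$ lies in some conjugate of $M_i$ iff some root of $f_i(t,X)$ lies in $\Q$.

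Chaining these equivalences yields the theorem. The main technical obstacle will be establishing the unramified-specialization statement in the precise form needed---that $\ell(t)\Delta(t)\ne 0$ suffices to give simultaneously $G_t\cong D_\p$ and $(B/\p)^{D_\p}=\Q$---together with the separation claim that reduction modulo $\p$ is injective on the Galois conjugates of $\beta_i$ whenever $\disc f_i(t,X)\ne 0$. Once these two statements are in hand, the remainder is essentially bookkeeping about how the $G$-action on $L$ descends to the $D_\p$-action on $B/\p$.
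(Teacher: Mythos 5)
Your outline is correct, and its overall architecture coincides with the paper's: specialize via the prime above $T-t$ in an integral extension of a localization of $\Q[T]$, use $\ell(t)\Delta(t)\neq 0$ to make that prime unramified and to identify $G_t$ with the decomposition group $D_{\p}$, reduce membership in $\calE(P)$ to the single condition $D_{\p}\subsetneq G$, and detect containment of $D_{\p}$ in a conjugate of some $M_i$ through rational roots of $f_i(t,X)$. The one place where you genuinely diverge is the detection step. The paper routes it through ideal theory: Lemma \ref{conductor_lem} shows $\p_t$ is prime to the conductor of $A[\theta]$, Dedekind--Kummer converts rational roots of $f_i(t,X)$ into primes of $F_i$ of residual degree one, and Lemma \ref{decomposition_field_lem} converts those into the containment $F_i\subseteq Z_{\P}$. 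You instead stay inside the splitting field and argue at the level of elements: the conjugates $g\beta_i$ reduce injectively (by $\disc f_i(t,X)\neq 0$) onto all roots of $f_i(t,X)$, and $D_{\p}\subseteq\operatorname{Stab}_G(g\beta_i)$ is equivalent to $\overline{g\beta_i}\in\Q$ because $(B/\p)^{D_{\p}}=\Q$. This is a legitimate and somewhat more elementary substitute for Proposition \ref{decomposition_field_prop} --- it avoids Dedekind--Kummer and the conductor lemma entirely for this step --- at the cost of being tied to the chosen primitive elements rather than giving the intrinsic statement about residual degrees and decomposition fields that the paper records and reuses. Two small points to tighten: your parenthetical ``so $G_t$ differs from $G$ as a permutation group'' is both unnecessary and not quite what is needed (the exceptional set is defined by abstract non-isomorphism; the correct and simpler observation is that a proper subgroup of a finite group has smaller order, hence cannot be abstractly isomorphic to it), and you should note explicitly that surjectivity of reduction from the roots of $P$ onto the roots of $P_t$ (needed for the transitivity argument) follows by counting, since $\ell(t)\neq 0$ keeps $\deg P_t=n$.
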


It follows from Theorem \ref{main_hit_thm_intro} that we may take $D$ to be the finite set of rational numbers $t$ for which \eqref{intro_exceptional_set_eq} does not hold, and we may take $C_i$ to be the affine plane curve defined by the equation $f_i(T,X)=0$. Indeed, the theorem implies that -- disregarding elements of $D$ -- the set $\calE(P)$ consists of the first coordinates of all the rational points on the curves $C_i$. 

In practice this result can be used to explicitly describe the set $\calE(P)$ for any given polynomial $P$. All of the algebraic objects appearing in the theorem -- in particular the group $G$, the subgroups $M_i$, and the polynomials $f_i$ -- can be computed using currently available methods in computer algebra. Furthermore, depending on the geometry of the curves $C_i$, one may be able to determine the sets of rational points on all these curves, thus obtaining a complete characterization of the elements of $\calE(P)$.

A more general version of Theorem \ref{main_hit_thm_intro} is proved in \S\ref{hit_section}, and further details regarding the associated algorithmic questions are given in \S\ref{algorithm_section}. In order to illustrate the process described above, we include two examples in \S\ref{examples_section}.  The first example concerns the polynomial
\[P(T,X)=3X^4-4X^3+1+3T^2,\]
which is one polynomial in a family discussed by Serre \cite[\S4.5]{serre_topics}. The Galois group of $P$ is isomorphic to the alternating group $A_4$, so a typical specialization $P_t$ will have Galois group $G_t\cong A_4$. We show that there are infinitely many specializations of $P$ with Galois group different from $A_4$, and that these can parametrized. More precisely, we prove:
\[G_t\not\cong A_4\iff t=\frac{v^3 - 9v}{9(1 - v^2)}\;\;\text{for some}\;v\in\Q.\]
In the second example we consider the polynomial
\[P(T,X)=X^6+T^6-1.\]
The case $n=3$ of Fermat's Last Theorem implies that the only rational numbers $t$ for which $P_t$ has a rational root are 0 and $\pm 1$. We will prove the stronger result that in fact 0 and $\pm 1$ are the only rational numbers $t$ for which $P_t$ is reducible.

\subsection*{Acknowledgements} I thank Pierre D\`ebes for several helpful discussions related to the material of \S\ref{hit_section}.
\section{HIT via extensions of Dedekind domains}\label{hit_section}

Let $k$ be a field of characteristic 0 and let $P(T,X)\in k[T,X]$ be a polynomial of degree $n\ge 1$ in the variable $X$. We will henceforth regard $P$ as an element of the ring $k(T)[X]$ and assume that $P$ is separable. We define the \textit{factorization type} of $P$, denoted $\calF(P)$, to be the multiset consisting of the degrees of the irreducible factors of $P$. 

Let $N/k(T)$ be a splitting field of $P$ and let $G=\Gal(N/k(T))$ be the Galois group of $P$. We assume that $G$ is nontrivial. For every element $t\in k$, let $P_t$ denote the specialized polynomial $P(t,X)\in k[X]$. The Galois group and factorization type of $P_t$ will be denoted by $G_t$ and $\calF(P_t)$, respectively.

It follows from HIT that there is a thin\footnote{See \cite[\S 3.1]{serre_topics} for a definition and properties of thin sets.} subset of $k$ outside of which we have $\calF(P_t)=\calF(P)$ and $G_t\cong G$. We define the \textit{exceptional set} of $P$, denoted $\calE(P)$, to be the set of all elements $t\in k$ for which either one of these conditions fails to hold:
\[\calE(P)=\{t\in k\;\vert\;\calF(P_t)\ne\calF(P)\;\text{or}\;G_t\not\cong G\}.\]
Our aim in this section is to prove a version of HIT from which one can deduce a method for explicitly describing the set $\calE(P)$. Our main result in this direction is Theorem \ref{main_hit_thm} below.

Let $\Delta(T)$ and $\ell(T)$ be the discriminant and leading coefficient of $P$, respectively. Let $A\subset k(T)$ be the ring
\[A=k[T]\left[\ell(T)^{-1}\right].\]

For every intermediate field $F$ between $k(T)$ and $N$, let $\calO_F$ denote the integral closure of $A$ in $F$. Note that $\calO_F/A$ is an extension of Dedekind domains with $A$ being a PID. By a \textit{prime} of $F$ (or of $\calO_F)$ we mean a maximal ideal of $\calO_F$. If $\p$ is a prime of $A$ and $\q$ is a prime of $\calO_F$, we denote by $\kappa(\q)$ and $\kappa(\p)$ the residue fields of $\q$ and $\p$, respectively. Thus,
\[\kappa(\q)=\calO_F/\q,\;\;\kappa(\p)=A/\p.\]
If $\q$ divides $\p\calO_F$, we denote the ramification index and residual degree of $\q$ over $\p$ by $e(\q/\p)$ and $f(\q/\p)$, respectively.

For every prime $\P$ of $N$, let $G_{\P}$ be the decomposition group of $\P$ over $k(T)$ and let $Z_{\P}$ be the decomposition field of $\P$, i.e., the fixed field of $G_{\P}$. We refer the reader to \cite[Chap. I, \S\S 8-9]{neukirch} for the standard material on decomposition groups and ramification used in this section. 

If $t\in k$ is any element satisfying $\ell(t)\ne 0$, then the evaluation homomorphism $k[T]\to k$ given by $a(T)\mapsto a(t)$ extends uniquely to a homomorphism $A\to k$. Let $\p_t$ be the kernel of this map. We will henceforth identify the residue field $\kappa(\p_t)$ with $k$ via the map $a(T)\bmod\p_t\mapsto a(t)$. Note that with this identification, if $f(T,X)\in A[X]$ is an arbitrary polynomial, then upon reducing the coefficients of $f$ modulo $\p_t$ we obtain the specialized polynomial $f(t,X)\in k[X]$.

It will be necessary for our purposes in this section to be able to determine how the prime $\p_t$ factors in any intermediate field $F$ between $k(T)$ and $N$. Recall that by a theorem of Dedekind-Kummer, for all but finitely many primes $\p$ of $A$, the factorization of $\p$ in $F$ can be determined by choosing an integral primitive element $\theta$ of $F/k(T)$ and factoring its minimal polynomial modulo $\p$. The finite set of primes that need to be excluded are those that are not relatively prime to the conductor of the ring $A[\theta]$; see \cite[p. 47, Prop. 8.3]{neukirch} for details. The following lemma provides sufficient conditions on $t\in k$ so that $\p_t$ will be relatively prime to this conductor, and therefore the Dedekind-Kummer criterion can be applied to $\p_t$.

\begin{lem}\label{conductor_lem}
Let $F$ be an intermediate field between $k(T)$ and $N$ with primitive element $\theta\in\calO_F$ having minimal polynomial $f(T,X)\in A[X]$. Let
\[\mathfrak F=\{\alpha\in\calO_F\;\vert\;\alpha\cdot\calO_F\subseteq A[\theta]\}\]
be the conductor of the ring $A[\theta]$. Suppose that $t\in k$ satisfies
\[\ell(t)\cdot\disc f(t,X)\ne 0.\]
Then $\p_t\calO_F$ is relatively prime to $\mathfrak F$. Furthermore, $\p_t$ is unramified in $F$.
\end{lem}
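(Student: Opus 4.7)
The plan is to combine two classical facts for extensions of Dedekind domains: the discriminant--index identity, and the localization characterization of the conductor. The important structural observation is that $A=k[T][\ell(T)^{-1}]$ is a principal ideal domain, so $\calO_F$ is free of rank $n=[F:k(T)]$ over $A$ and every finitely generated torsion $A$-module has a well-defined principal order ideal.

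The first step is to establish the identity of ideals of $A$
\[
\bigl(\disc f(T,X)\bigr)\;=\;[\calO_F:A[\theta]]_A^{\,2}\cdot\disc(\calO_F/A),
\]
where $[\calO_F:A[\theta]]_A$ is the $A$-order ideal of the finitely generated torsion $A$-module $\calO_F/A[\theta]$. This follows from the fact that $1,\theta,\ldots,\theta^{n-1}$ form a free $A$-basis of $A[\theta]$ whose discriminant equals $\disc f(T,X)$, together with the base-change formula for discriminants computed in two $A$-bases of $\calO_F\otimes_A \mathrm{Frac}(A)=F$; the analogous statement over a DVR is in \cite[Chap.~III]{neukirch}, and the PID case follows by localizing at each prime.

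The hypothesis $\disc f(t,X)\ne 0$ says exactly that $\disc f(T,X)\notin\p_t$, so the left-hand side above is a unit in $A_{\p_t}$. It follows that both $[\calO_F:A[\theta]]_A$ and $\disc(\calO_F/A)$ are coprime to $\p_t$ in $A$. Coprimality of $\disc(\calO_F/A)$ with $\p_t$ is the standard criterion for $\p_t$ to be unramified in $F$, giving the second assertion. For the first assertion, coprimality of $[\calO_F:A[\theta]]_A$ with $\p_t$ forces $(\calO_F/A[\theta])_{\p_t}=0$, so $A_{\p_t}[\theta]=(\calO_F)_{\p_t}$; hence $\mathfrak F\cdot(\calO_F)_{\p_t}=(\calO_F)_{\p_t}$, meaning $\mathfrak F$ is contained in no prime of $\calO_F$ lying above $\p_t$, which is exactly $\mathfrak F+\p_t\calO_F=\calO_F$.

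I expect the main obstacle to be the bookkeeping for the discriminant--index identity in this setting, since no global integral basis for $\calO_F$ over $A$ has been chosen. Because $A$ is a PID, however, $\calO_F$ admits such a basis and the textbook proof via comparison of bases applies; the remainder of the argument is the routine translation between coprimality of an $A$-ideal with $\p_t$, vanishing of a finitely generated $A$-module after localization, and non-containment in primes of $\calO_F$ lying over $\p_t$.
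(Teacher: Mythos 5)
Your proof is correct, but it takes a genuinely different route from the paper's. The paper handles the conductor statement directly: by the Cramer's-rule lemma (\cite[p.~12, Lemma~2.9]{neukirch}) one has $\delta\cdot\calO_F\subseteq A[\theta]$ for $\delta=\disc f$, hence $\delta\in\mathfrak F$; any prime $\q$ dividing both $\mathfrak F$ and $\p_t\calO_F$ would then give $\delta\in\q\cap A=\p_t$, i.e.\ $\disc f(t,X)=0$, a contradiction. Unramifiedness is then deduced \emph{from} the conductor statement: once $\p_t$ is prime to $\mathfrak F$, Dedekind--Kummer applies, and ramification would force a repeated irreducible factor of $f(t,X)$, again contradicting $\disc f(t,X)\ne 0$. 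You instead invoke the discriminant--index identity $(\disc f)=[\calO_F:A[\theta]]_A^2\cdot\disc(\calO_F/A)$ (legitimate here since $A$ is a PID, so $\calO_F$ is free over $A$), conclude that both factors are prime to $\p_t$, and then derive the two assertions independently and in parallel: unramifiedness from the discriminant--ramification criterion (valid since the residue field $\kappa(\p_t)=k$ has characteristic $0$), and coprimality with the conductor from vanishing of $(\calO_F/A[\theta])_{\p_t}$ after localization. Your argument uses somewhat heavier standard machinery but yields the sharper intermediate facts that both the index and the relative discriminant are units at $\p_t$; the paper's argument is more economical, needing only the single containment $\delta\calO_F\subseteq A[\theta]$ plus Dedekind--Kummer, which it must set up anyway for the subsequent propositions. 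Both are complete proofs of the lemma.
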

\begin{proof}
Let $\delta\in A$ be the discriminant of $f$. By a linear algebra argument (see Lemma 2.9 in \cite[p. 12]{neukirch}) we have $\delta\cdot\calO_F\subseteq A[\theta]$ and therefore $\delta\in\mathfrak F$. Suppose that $\q$ is a prime of $F$ dividing both $\mathfrak F$ and $\p_t\calO_F$. Since $\mathfrak F\subseteq\q$ we have $\delta\in\q$, so $\delta\in\q\cap A=\p_t$. By definition of $\p_t$ this implies that $\disc f(t,X)=\delta(t)=0$, which is a contradiction. Therefore $\p_t$ must be relatively prime to $\mathfrak F$. 

The Dedekind-Kummer theorem now allows us to relate the factorization of $\p_t$ in $F$ to the factorization of $f(t,X)$ in $k[X]$. In particular, the theorem implies that if $\p_t$ is ramified in $F$, then $f(t,X)$ has a repeated irreducible factor, which contradicts our assumption that $\disc f(t,X)\ne 0$. Therefore $\p_t$ must be unramified in $F$.
\end{proof}

\begin{lem}\label{unramified_lem}
Suppose that  $t\in k$ satisfies $\Delta(t)\cdot\ell(t)\ne 0$. Then the prime $\p_t$ is unramified in $N$.
\end{lem}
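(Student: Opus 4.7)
The plan is to reduce the claim to Lemma~\ref{conductor_lem} applied on each subfield generated by a single root of $P$, and then pass to the splitting field $N$ via a compositum argument.

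First I would note that since $P/\ell(T) \in A[X]$ is monic and separable, every root of $P$ in $N$ is integral over $A$ and hence lies in $\calO_N$. Factoring $P/\ell(T) = g_1(T,X)\cdots g_s(T,X)$ into monic irreducibles in the UFD $A[X]$, the multiplicativity of the discriminant over coprime factors yields
\[\disc(P/\ell) = \prod_{j=1}^s \disc(g_j)\cdot\prod_{i<j}\Res(g_i,g_j)^2,\]
and the identity $\Delta(T) = \ell(T)^{2n-2}\,\disc(P/\ell)$ (where $n$ is the $X$-degree of $P$) then shows that the hypothesis $\Delta(t)\cdot\ell(t)\ne 0$ forces $\disc(g_j)(t)\ne 0$ for every $j$.

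For each root $\alpha$ of $P$, say a root of the factor $g_j$, consider the intermediate field $F_\alpha = k(T)(\alpha)$, with integral primitive element $\alpha \in \calO_{F_\alpha}$ and minimal polynomial $g_j \in A[X]$. Since $\ell(t)\cdot\disc(g_j)(t)\ne 0$, Lemma~\ref{conductor_lem} yields that $\p_t$ is unramified in $F_\alpha$. Now $N$ is the compositum of the fields $F_\alpha$ as $\alpha$ ranges over the roots of $P$, and unramifiedness at $\p_t$ is preserved under compositum -- a local fact, provable either via the criterion $\p\nmid\disc(\calO_F/A)$ or by passing to completions and using that the compositum of unramified extensions of a complete discretely valued field is itself unramified -- so $\p_t$ is unramified in $N$.

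The step requiring the most care is the compositum argument: the fields $F_\alpha$ do not form a Galois tower over $k(T)$, so one cannot simply iterate the conductor lemma, and the conclusion rests on this local characterization of unramifiedness together with its preservation under compositum.
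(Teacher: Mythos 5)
Your proof is correct and takes essentially the same route as the paper's: reduce to the subfields $k(T)(\alpha)$ generated by single roots, observe that the discriminant of each monic irreducible factor divides $\Delta$ (up to the unit $\ell$) so that it does not vanish at $t$, apply Lemma~\ref{conductor_lem}, and conclude by the fact that unramifiedness is preserved under compositum. Your justifications (the explicit discriminant--resultant product formula and the local argument for the compositum step) are just more detailed versions of what the paper asserts with citations.
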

\begin{proof}
Since $N$ is the compositum of the fields $k(T)(\theta)$ as $\theta$ ranges over the roots of $P$ in $N$, it suffices to show that $\p_t$ is unramified in every such field. (See \cite[p. 119, Cor. 8.7]{lorenzini}.) Thus, let $\theta\in \calO_N$ be any root of $P$ and let $F=k(T)(\theta)$. Let $Q\in k[T][X]$ be an irreducible factor of $P$ having $\theta$ as a root. Dividing $Q$ by its leading coefficient we obtain a monic irreducible polynomial $f\in A[X]$ having $\theta$ as a root; it follows that $f$ is the minimal polynomial of $\theta$ over $k(T)$. Let $\delta\in A$ be the discriminant of $f$. Since $f$ divides $P$ in $A[X]$, $\delta$ divides $\Delta$ in $A$. Hence, the hypothesis that $\Delta(t)\ne 0$ implies that $\delta(t)\ne 0$. The result now follows from Lemma \ref{conductor_lem}.
\end{proof}

\begin{prop}\label{decomposition_group_prop}
Suppose that $t\in k$ satisfies $\Delta(t)\cdot\ell(t)\ne 0$, and let $\P$ be a prime of $N$ dividing $\p_t$. Then $G_{\P}$ is isomorphic to $G_t$.
\end{prop}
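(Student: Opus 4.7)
The plan is to exhibit $G_\P \cong G_t$ as a composition of two natural maps: a reduction isomorphism provided by unramifiedness, followed by restriction to a subfield of $\kappa(\P)$ that is identifiable with the splitting field of $P_t$. First, Lemma~\ref{unramified_lem} tells us that $\p_t$ is unramified in $N$, so the standard theory of decomposition groups in a Galois extension of Dedekind domains yields an isomorphism $\phi: G_\P \xrightarrow{\sim} \Gal(\kappa(\P)/k)$ via the natural action on residues.

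To identify a suitable subfield of $\kappa(\P)$, I would first note that each root $\theta_i$ of $P$ lies in $\calO_N$: since $\ell(T)\in A^\times$, the polynomial $P/\ell(T)$ is a monic element of $A[X]$ vanishing at $\theta_i$. Reducing the factorization $P = \ell(T)\prod_{i=1}^n (X-\theta_i)$ modulo $\P$ yields $P_t = \ell(t)\prod_{i=1}^n (X-\bar\theta_i)$ in $\kappa(\P)[X]$, so $L := k(\bar\theta_1,\ldots,\bar\theta_n) \subseteq \kappa(\P)$ is a splitting field of $P_t$ over $k$; hence $\Gal(L/k) = G_t$.

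Finally I would compose $\phi$ with the restriction $\Gal(\kappa(\P)/k) \onto \Gal(L/k)$ to get a homomorphism $\psi: G_\P \to G_t$, which is surjective because $L/k$ is a Galois subextension. For injectivity, suppose $\sigma \in \ker\psi$; then $\phi(\sigma)$ fixes each $\bar\theta_i$, so $\sigma(\theta_i) - \theta_i \in \P$ for every $i$. Since $\sigma$ permutes the roots of $P$, there is a permutation $\pi$ with $\theta_{\pi(i)} - \theta_i \in \P$. The hypothesis does its essential work here: the identity $\Delta(T) = \ell(T)^{2n-2}\prod_{i<j}(\theta_i-\theta_j)^2$ combined with $\Delta(t)\cdot\ell(t)\ne 0$ ensures that every difference $\theta_i - \theta_j$ with $i \ne j$ stays outside $\P$, forcing $\pi$ to be the identity. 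Hence $\sigma$ fixes every $\theta_i$ and so acts trivially on $N = k(T)(\theta_1,\ldots,\theta_n)$. The only delicate step is this final separation argument, which is precisely what makes the hypothesis $\Delta(t)\ne 0$ (and not merely $\ell(t)\ne 0$) necessary.
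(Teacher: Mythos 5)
Your proposal is correct and follows essentially the same route as the paper: both use the unramifiedness from Lemma \ref{unramified_lem} to identify $G_{\P}$ with $\Gal(\kappa(\P)/k)$, identify the subfield generated by the reduced roots as a splitting field of $P_t$, and use $\Delta(t)\ne 0$ to see that reduction is injective on the roots of $P$, which forces the kernel of the restriction to be trivial. The paper phrases this last step as showing $\Gal(\kappa(\P)/S)$ is trivial (so the residue field equals the splitting field), while you phrase it as injectivity of the composite $G_{\P}\to G_t$; these are the same argument.
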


\begin{proof}
For every element $a\in\calO_N$ let $\bar a$ denote the image of $a$ under the quotient map $\calO_N\to \kappa(\P)$. Recall that the extension $\kappa(\P)/k$ is Galois and that there is a surjective homomorphism $G_{\P}\to\Gal(\kappa(\P)/k)$ given by $\sigma\mapsto\bar\sigma$, where $\bar\sigma(\bar a)=\overline{\sigma(a)}$ for every $a\in\calO_N$. Furthermore, since $\p_t$ is unramified in $N$ by Lemma \ref{unramified_lem}, this map is an isomorphism. Hence, in order to prove the proposition it suffices to show that $\kappa(\P)$ is a splitting field for $P_t$.

Note that if $\alpha\in\calO_N$ is a root of $P$, then $\bar \alpha\in\kappa(\P)$ is a root of $P_t$. Moreover, if $\alpha$ and $\beta$ are distinct roots of $P$, then $\bar \alpha\ne\bar \beta$; indeed, this follows from the fact that $\bar\Delta=\Delta(t)\ne 0$. Thus, reduction modulo $\P$ is an injective map from the set of roots of $P$ to the set of roots of $P_t$.

Let $x_1,\ldots, x_n\in\calO_N$ be the roots of $P$ in $N$, and let $S=k(\bar x_1,\ldots, \bar x_n)$. Clearly $S$ is a splitting field for $P_t$, and $k\subseteq S\subseteq \kappa(\P)$. We will prove that $S=\kappa(\P)$ by showing that the group $\Gal(\kappa(\P)/S)$ is trivial. Let $\tau\in\Gal(\kappa(\P)/S)$ and let $\sigma\in G_{\P}$ be the element such that $\bar\sigma=\tau$. Since $\tau$ is the identity map on $S$, we have $\tau(\bar x_i)=\bar x_i$ for every index $i$, and hence $\overline{\sigma(x_i)}=\bar x_i$ for all $i$. Since $\sigma(x_i)$ and $x_i$ are roots of $P$, this implies that $\sigma(x_i)=x_i$. Thus, $\sigma$ fixes every root of $P$, so $\sigma$ is the identity element of $G_{\P}$. Hence $\tau=\bar\sigma$ is the identity element of $\Gal(\kappa(\P)/S)$. This proves that $\Gal(\kappa(\P)/S)$ is trivial and therefore $\kappa(\P)=S$ is a splitting field for $P_t$.
\end{proof}

\begin{lem}\label{decomposition_field_lem}
Let $\p$ be a prime of $A$ and let $\P$ be a prime of $N$ dividing $\p$. Then the following hold:
\begin{enumerate}
\item Setting $\mathfrak Q=\P\cap Z_{\P}$, we have $e(\mathfrak Q/\p)=f(\mathfrak Q/\p)=1$.
\item Let $F$ be an intermediate field between $k(T)$ and $N$, and let $\q=\P\cap F$. If $e(\q/\p)=f(\q/\p)=1$, then $F\subseteq Z_{\P}$.
\end{enumerate}
\end{lem}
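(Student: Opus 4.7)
The plan is to prove both statements as direct consequences of two standard facts about a Galois extension $N/k(T)$ of Dedekind domains: the identity $|G_\P|=e(\P/\p)\,f(\P/\p)$ at any prime $\P$ lying over $\p$, and the multiplicativity of ramification indices and residual degrees in towers. In both parts I expect to compare the order of $G_\P$ against the orders of decomposition groups in intermediate sub-extensions.

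For part (1), recall that $G_\P=\Gal(N/Z_\P)$, so $N/Z_\P$ is Galois with group $G_\P$, and $\mathfrak Q=\P\cap Z_\P$. Because $G_\P$ fixes $\P$ and acts transitively on the primes of $N$ above $\mathfrak Q$, the prime $\P$ is the \emph{unique} prime of $N$ lying above $\mathfrak Q$. The $efg$-identity applied to the Galois extension $N/Z_\P$ therefore yields
\[
|G_\P|=e(\P/\mathfrak Q)\,f(\P/\mathfrak Q).
\]
On the other hand, the same identity applied to $N/k(T)$ gives $|G_\P|=e(\P/\p)\,f(\P/\p)$, and by multiplicativity in the tower $\p\subseteq\mathfrak Q\subseteq\P$ we have $e(\P/\p)=e(\P/\mathfrak Q)\,e(\mathfrak Q/\p)$ and likewise for $f$. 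Comparing the two expressions for $|G_\P|$ forces $e(\mathfrak Q/\p)\,f(\mathfrak Q/\p)=1$, so both factors equal $1$.

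For part (2), set $H=\Gal(N/F)$, so that $F=N^H$ and $N/F$ is Galois with group $H$ (since $N/k(T)$ is Galois). The decomposition group of $\P$ in this sub-extension, i.e.\ the stabilizer of $\P$ in $H$, is exactly $H\cap G_\P$, and so $|H\cap G_\P|=e(\P/\q)\,f(\P/\q)$. Using multiplicativity in the tower $\p\subseteq\q\subseteq\P$,
\[
|G_\P|=e(\P/\p)\,f(\P/\p)=e(\P/\q)\,f(\P/\q)\cdot e(\q/\p)\,f(\q/\p)=|H\cap G_\P|\cdot e(\q/\p)\,f(\q/\p),
\]
whence $[G_\P:H\cap G_\P]=e(\q/\p)\,f(\q/\p)$. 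Under the hypothesis that $e(\q/\p)=f(\q/\p)=1$, this index is $1$, forcing $G_\P\subseteq H$, and therefore $F=N^H\subseteq N^{G_\P}=Z_\P$.

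There is no serious obstacle here; the only delicate point is the bookkeeping needed to identify, in part (2), the decomposition group of $\P$ viewed as a prime of the Galois extension $N/F$ with the intersection $H\cap G_\P$, after which the conclusion is forced by the $efg$-identity and tower multiplicativity. These facts are covered by the standard reference to \cite[Chap.~I, \S\S 8--9]{neukirch} already cited in the section.
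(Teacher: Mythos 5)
Your proof is correct. Note, however, that the paper does not actually prove this lemma: it simply cites \cite[p.~55, Prop.~9.3]{neukirch} and \cite[p.~118, Prop.~8.6]{lorenzini}, which are precisely the two assertions in question. Your argument is, in essence, the standard proof of those cited results: both parts reduce to the identity $|G_{\P}|=e(\P/\p)f(\P/\p)$ together with multiplicativity of $e$ and $f$ in towers, applied once to $N/Z_{\P}$ (where $\P$ is the unique prime above $\mathfrak Q$, so the fundamental identity gives $|G_{\P}|=e(\P/\mathfrak Q)f(\P/\mathfrak Q)$) and once to $N/F$ (where the decomposition group over $F$ is $H\cap G_{\P}$). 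The bookkeeping is right in both cases, and the one implicit ingredient -- the fundamental identity $\sum_i e_if_i=[N:k(T)]$ for the extension $\calO_N/A$, which underlies $|G_{\P}|=ef$ -- is available here because $k$ has characteristic $0$, so all residue extensions are separable and $\calO_N$ is finite over $A$. So the proposal is a complete, self-contained substitute for the paper's appeal to the literature.
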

\begin{proof}
See \cite[p. 55, Prop. 9.3]{neukirch} and \cite[p. 118, Prop. 8.6]{lorenzini}.
\end{proof}

\begin{prop}\label{decomposition_field_prop}
Let $F$ be an intermediate field between $k(T)$ and $N$. Let $\theta\in\calO_F$ be a primitive element for $F/k(T)$ and let $f(T,X)\in A[X]$ be the minimal polynomial of $\theta$. Suppose that $t\in k$ satisfies 
\[\Delta(t)\cdot\ell(t)\cdot\disc f(t,X)\ne 0.\]
Then the following are equivalent:
\begin{enumerate}
\item The polynomial $f(t,X)$ has a root in $k$.
\item There exists a prime $\q$ of $F$ dividing $\p_t$ such that $f(\q/\p_t)=1$.
\item There exists a prime $\P$ of $N$ dividing $\p_t$ such that $F\subseteq Z_{\P}$.
\end{enumerate}
\end{prop}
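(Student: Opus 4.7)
The plan is to show $(1)\Leftrightarrow(2)$ by a direct application of the Dedekind--Kummer theorem, and then show $(2)\Leftrightarrow(3)$ by combining Lemma \ref{decomposition_field_lem} with multiplicativity of ramification indices and residual degrees in towers. Two preliminary observations set everything up: first, since the hypothesis $\ell(t)\cdot\disc f(t,X)\ne 0$ is exactly what Lemma \ref{conductor_lem} requires, the prime $\p_t$ is relatively prime to the conductor of $A[\theta]$, so Dedekind--Kummer applies to $\p_t$ in $F$. Second, the hypothesis $\Delta(t)\cdot\ell(t)\ne 0$ together with Lemma \ref{unramified_lem} ensures that $\p_t$ is unramified in $N$, hence unramified in both $F$ and $Z_{\P}$.

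For $(1)\Leftrightarrow(2)$, I would factor the specialization $f(t,X)=\prod_j g_j(X)$ into distinct monic irreducibles in $k[X]$ (they are distinct since $\disc f(t,X)\ne 0$). By Dedekind--Kummer, the primes of $F$ dividing $\p_t$ are in bijection with the factors $g_j$, with $f(\q_j/\p_t)=\deg g_j$ and $e(\q_j/\p_t)=1$. A prime of residual degree one therefore exists if and only if some $g_j$ is linear, i.e., if and only if $f(t,X)$ has a root in $k$.

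For $(2)\Rightarrow(3)$, given $\q$ with $f(\q/\p_t)=1$ (and automatically $e(\q/\p_t)=1$ by unramifiedness), I would choose any prime $\P$ of $N$ above $\q$; then $F\subseteq Z_{\P}$ by part (2) of Lemma \ref{decomposition_field_lem}. Conversely for $(3)\Rightarrow(2)$, suppose $F\subseteq Z_{\P}$ and set $\mathfrak Q=\P\cap Z_{\P}$ and $\q=\P\cap F=\mathfrak Q\cap F$. Part (1) of Lemma \ref{decomposition_field_lem} gives $e(\mathfrak Q/\p_t)=f(\mathfrak Q/\p_t)=1$, and multiplicativity in the tower $\p_t\subseteq\q\subseteq\mathfrak Q$,
\[
e(\mathfrak Q/\p_t)=e(\mathfrak Q/\q)\,e(\q/\p_t),\qquad f(\mathfrak Q/\p_t)=f(\mathfrak Q/\q)\,f(\q/\p_t),
\]
forces $e(\q/\p_t)=f(\q/\p_t)=1$, as desired.

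There is no real obstacle here beyond confirming the hypotheses of the invoked lemmas; the only point requiring any care is verifying that the Dedekind--Kummer theorem may be applied to $\p_t$ in $F$, and that is precisely the content of Lemma \ref{conductor_lem}. The rest is bookkeeping with decomposition groups and the standard behavior of $e$ and $f$ in towers of Dedekind extensions.
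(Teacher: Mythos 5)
Your proof is correct and follows essentially the same route as the paper: Lemma \ref{conductor_lem} plus Dedekind--Kummer for $(1)\Leftrightarrow(2)$, Lemma \ref{unramified_lem} plus part (2) of Lemma \ref{decomposition_field_lem} for $(2)\Rightarrow(3)$, and part (1) of that lemma with multiplicativity of $e$ and $f$ in the tower $k(T)\subseteq F\subseteq Z_{\P}$ for $(3)\Rightarrow(2)$. The paper phrases the last step as ``$f(\q/\p_t)$ divides $f(\mathfrak Q/\p_t)$,'' which is the same fact you spell out.
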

\begin{proof}
By Lemma \ref{conductor_lem}, $\p_t$ is relatively prime to the conductor of $A[\theta]$. The Dedekind-Kummer theorem then implies that the degrees of the irreducible factors of $f(t,X)$ in $k[X]$ correspond to the residual degrees $f(\q/\p_t)$ for primes $\q$ of $F$ dividing $\p_t$. The equivalence of (1) and (2) follows immediately.

We now show that (2) and (3) are equivalent. Suppose that (2) holds, and let $\P$ be a prime of $N$ dividing $\q$. By Lemma \ref{unramified_lem}, $\p_t$ is unramified in $N$ and therefore unramified in $F$. Hence, $e(\q/\p_t)=1$. By Lemma \ref{decomposition_field_lem}, $F\subseteq Z_{\P}$. Thus, (3) holds.

Finally, suppose that (3) holds. Let $\P$ be a prime of $N$ dividing $\p_t$ such that $F\subseteq Z_{\P}$. Let $\mathfrak Q=\P\cap Z_{\P}$ and $\q=\P\cap F$. Since $f(\mathfrak Q/\p_t)=1$ and $f(\q/\p_t)$ divides $f(\mathfrak Q/\p_t)$, we have $f(\q/\p_t)=1$. Thus, (2) holds.
\end{proof}

\begin{thm}\label{main_hit_thm}
Let $M_1,\ldots, M_r$ be representatives of all the conjugacy classes of maximal subgroups of $G$. For $i=1,\ldots, r$ let $F_i$ be the fixed field of $M_i$, and let $f_i(T,X)$ be a monic irreducible polynomial in $k[T][X]$ such that $F_i/k(T)$ is generated by a root of $f_i(T,X)$. Suppose that $t\in k$ satisfies
\[\Delta(t)\cdot\ell(t)\cdot\prod_{i=1}^r\disc f_i(t,X)\ne 0.\]
Then the following hold:
\begin{enumerate}
\item If  $\calF(P_t)\ne\calF(P)$, then $G_t\not\cong G$.
\item $G_t\not\cong G$ $\iff$ there is an index $i$ such that $f_i(t,X)$ has a root in $k$.
\end{enumerate}
\end{thm}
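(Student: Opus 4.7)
The plan is to combine Proposition \ref{decomposition_group_prop}, which identifies $G_t$ with the decomposition group $G_\P$ for any prime $\P$ of $N$ over $\p_t$, with the group-theoretic observation that any proper subgroup of $G$ lies in a conjugate of some $M_i$, and then translate this via the Galois correspondence into a statement about the fixed fields $F_i$ that can be read off from Proposition \ref{decomposition_field_prop}.

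For part (1), I would start by using that $\Delta(t)\ne 0$ forces reduction modulo $\P$ to be a bijection from the roots $x_1,\ldots,x_n$ of $P$ in $\calO_N$ onto the roots of $P_t$ in $\kappa(\P)$ (a fact already established inside the proof of Proposition \ref{decomposition_group_prop}). Under this bijection, the isomorphism $G_\P\cong G_t$ intertwines the action of $G_\P$ on the roots of $P$ with the action of $G_t$ on the roots of $P_t$, so the two actions have the same orbit sizes. Since the orbits of a Galois group on the roots of a separable polynomial correspond to its irreducible factors, the orbit sizes of $G_\P$ on $\{x_i\}$ realize $\calF(P_t)$ while those of $G$ realize $\calF(P)$. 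Because $G_\P\subseteq G$, orbits of $G_\P$ refine those of $G$; the hypothesis $\calF(P_t)\ne\calF(P)$ forces strict refinement, so $G_\P\subsetneq G$, and since both groups are finite, $|G_t|=|G_\P|<|G|$ yields $G_t\not\cong G$.

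For part (2), the same identification gives $G_t\cong G$ iff $G_\P=G$, so $G_t\not\cong G$ is equivalent to $G_\P$ being a proper subgroup, which in turn is equivalent to $G_\P$ being contained in some maximal subgroup of $G$. Every such maximal subgroup is conjugate to one of $M_1,\ldots,M_r$, while the primes of $N$ over $\p_t$ form a single $G$-orbit with $G_{\sigma(\P)}=\sigma G_\P\sigma^{-1}$. Hence the existence of a maximal subgroup containing $G_\P$ for some $\P\mid\p_t$ is equivalent to the existence of an index $i$ and a prime $\P'\mid\p_t$ with $G_{\P'}\subseteq M_i$. By the Galois correspondence the latter says $F_i\subseteq Z_{\P'}$, and by Proposition \ref{decomposition_field_prop} that is equivalent to $f_i(t,X)$ having a root in $k$.

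The main delicate point is the shuffling between ``up to conjugacy'' and ``for the specific representative $M_i$'': a proper $G_\P$ is only guaranteed to lie inside some conjugate $gM_ig^{-1}$, and one must swap the prime $\P$ for $g^{-1}(\P)$ in order to land inside $M_i$ itself. Once this is arranged, both parts reduce to Propositions \ref{decomposition_group_prop} and \ref{decomposition_field_prop} together with elementary Galois correspondence and orbit counting.
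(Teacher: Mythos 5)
Your proposal is correct, and part (2) is essentially the paper's argument: identify $G_t$ with $G_\P$ via Proposition \ref{decomposition_group_prop}, note that a proper $G_\P$ lies in a conjugate of some $M_i$, replace $\P$ by the corresponding conjugate prime so that $G_\P\subseteq M_i$, translate to $F_i\subseteq Z_\P$ by the Galois correspondence, and invoke Proposition \ref{decomposition_field_prop}; you correctly flag the conjugacy bookkeeping as the one delicate point. Part (1), however, is where you genuinely diverge. The paper writes $P=\ell(T)P_1\cdots P_s$ with the $P_i$ monic irreducible over $A$, picks a factor $f$ whose specialization becomes reducible, applies Lemma \ref{conductor_lem} and Dedekind--Kummer to $F=k(T)(\theta)$ for a root $\theta$ of $f$ to conclude that $\p_t$ has more than one prime above it in $N$, and deduces $G_\P\subsetneq G$ from the orbit--stabilizer count $|G:G_\P|=\#\{\P\mid\p_t\}$. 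You instead exploit the equivariant bijection between the roots of $P$ and the roots of $P_t$ under reduction modulo $\P$ (extracted from the proof of Proposition \ref{decomposition_group_prop}) and compare orbit partitions: since $G_\P\le G$, its orbits refine those of $G$, and a change in factorization type forces a strict refinement, hence $G_\P\subsetneq G$. Both arguments are valid; yours avoids a second pass through Dedekind--Kummer and keeps part (1) purely at the level of permutation actions, while the paper's version stays uniform with the prime-factorization language used throughout Section \ref{hit_section}. One small point to make explicit if you write this up: surjectivity of the reduction map onto the roots of $P_t$ uses $\ell(t)\ne 0$ (so $\deg P_t=n$) together with $\Delta(t)\ne 0$ (so $P_t$ has $n$ distinct roots), not just the injectivity stated in the Proposition's proof.
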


\begin{proof}
We begin by proving (1). Thus, suppose that $\calF(P_t)\ne\calF(P)$. Let $P_1,\ldots, P_s\in A[X]$ be monic irreducible polynomials such that
\[P=\ell(T)\cdot P_1\cdots P_s.\]
Since $\calF(P_t)\ne\calF(P)$, there exists $f\in\{P_1,\ldots, P_s\}$ such that $f(t,X)$ is reducible. Let $\theta\in \calO_N$ be a root of $f$ and let $F=k(T)(\theta)$. Since $\disc f$ divides $\disc P=\Delta$ and $\Delta(t)\ne 0$, then $\disc f(t,X)\ne 0$. Lemma \ref{conductor_lem} implies that $\p_t\calO_F$ is relatively prime to the conductor of $A[\theta]$; we may therefore apply the Dedekind-Kummer theorem to relate the factorization of $f(t,X)$ to the factorization of $\p_t\calO_F$.

Since $f(t,X)$ is separable and reducible, it must have more than one irreducible factor (up to associates). Hence, there is more than one prime of $F$ dividing $\p_t$, and therefore more than one prime of $N$ dividing $\p_t$. It follows that if $\P$ is any prime of $N$ dividing $\p_t$, the group $G_{\P}$ is a proper subgroup of $G$. (Indeed, the index $|G:G_{\P}|$ is the number of primes of $N$ dividing $\p_t$.) Proposition \ref{decomposition_group_prop} now implies that $G_t\not\cong G$, which proves (1).

We now prove (2). Suppose that $G_t\not\cong G$ and let $\P$ be a prime of $N$ dividing $\p_t$. By Proposition \ref{decomposition_group_prop}, the group $G_{\P}$ is a proper subgroup of $G$. Replacing $\P$ by a conjugate ideal if necessary, we may therefore assume that $G_{\P}\subseteq M_i$ for some index $i$. The decomposition field $Z_{\P}$ then contains $F_i$, and by Proposition \ref{decomposition_field_prop} applied to the field $F_i$, this implies that $f_i(t,X)$ has a root in $k$. This proves one direction of (2). The converse follows by a similar argument.
\end{proof}

It follows from the above theorem that the problem of determining the exceptional set of $P$ can be reduced to a problem of finding all the $k$-rational points on a finite list of curves. More precisely, we have the following.

\begin{cor}\label{exceptional_set_cor}
With notation as in Theorem \ref{main_hit_thm}, let $D$ be the finite set of all elements $t\in k$ such that
\[\Delta(t)\cdot\ell(t)\cdot\prod_{i=1}^r\disc f_i(t,X)= 0.\]
For $i=1,\ldots, r$ let $C_i$ be the affine plane curve defined by the equation $f_i(T,X)=0$. Let $t\in k\setminus D$. Then
$t\in\calE(P)$ if and only if $t$ is the first coordinate of a $k$-rational point on one of the curves $C_i$.
\end{cor}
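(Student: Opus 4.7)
The plan is to show that the corollary is essentially an immediate reformulation of Theorem \ref{main_hit_thm}, once the definition of $\calE(P)$ and the definition of the curves $C_i$ are unpacked. Since $t\notin D$, the nonvanishing hypothesis of Theorem \ref{main_hit_thm} applies, so both of its conclusions are at our disposal.

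I would proceed as follows. First, recall that by definition $t\in\calE(P)$ means $\calF(P_t)\ne\calF(P)$ or $G_t\not\cong G$. Part (1) of Theorem \ref{main_hit_thm} says the first alternative already forces the second, so for any $t\in k\setminus D$ one has the equivalence $t\in\calE(P)\iff G_t\not\cong G$. Next, apply part (2) of the same theorem: $G_t\not\cong G$ if and only if $f_i(t,X)$ has a root in $k$ for some index $i$. Combining these two biconditionals gives
\[t\in\calE(P)\iff\exists\,i\;\exists\,x\in k:\;f_i(t,x)=0.\]

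Finally, I would translate the right-hand side into the language of curves. By the very definition of $C_i$ as the vanishing locus of $f_i(T,X)$, the condition $f_i(t,x)=0$ with $x\in k$ is exactly the statement that $(t,x)\in C_i(k)$, i.e.\ that $t$ occurs as the first coordinate of a $k$-rational point on $C_i$. Ranging over all $i$ yields the desired characterization.

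There is no real obstacle here; the statement is essentially a repackaging of Theorem \ref{main_hit_thm} together with the observation that existence of a root of $f_i(t,X)$ in $k$ is tautologically the existence of a $k$-rational point on $C_i$ above $t$. The only minor bookkeeping is to confirm that $D$ is finite, which follows because $\Delta(T)\cdot\ell(T)\cdot\prod_i\disc f_i(T,X)$ is a nonzero element of $k[T]$ (the $f_i$ are separable since $\text{char}(k)=0$ and each $F_i/k(T)$ is separable, and likewise $P$ is separable by hypothesis), so it has only finitely many zeros in $k$.
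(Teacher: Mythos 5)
Your proposal is correct and matches the paper's treatment: the corollary is stated there as an immediate consequence of Theorem \ref{main_hit_thm}, obtained exactly by combining parts (1) and (2) with the observation that a $k$-root of $f_i(t,X)$ is the same as a $k$-rational point of $C_i$ with first coordinate $t$. Your added remark on the finiteness of $D$ is a correct (and welcome) piece of bookkeeping that the paper leaves implicit.
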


\section{Algorithmic aspects}\label{algorithm_section}

Theorem \ref{main_hit_thm} suggests the following algorithm which can be used to study the exceptional set of the polynomial $P$. We state the algorithm first and then explain its precise relation to this problem.

\begin{alg}\label{hit_data_algorithm}\mbox{}

\smallskip
\noindent \textit{Input:} \hspace{1mm} A separable polynomial $P\in k[T][X]$.\\
\noindent\textit{Output:} A finite set $D\subset k$ and a finite set $S\subset k[T][X]$.

\begin{enumerate}
\item Create empty sets $D$ and $S$.
\item Include in $D$ all the $k$-roots of the leading coefficient of $P$.
\item Include in $D$ all the $k$-roots of the discriminant of $P$.
\item Compute the group $G=\Gal(P)$. More precisely, find a permutation representation of $G$ induced by a labeling of the roots of $P$.
\item Find subgroups $M_1,\ldots, M_r$ representing all the conjugacy classes of maximal subgroups of $G$.
\item For $M\in\{M_1,\ldots, M_r\}$: 
\begin{enumerate}
\item Find a monic irreducible polynomial $f\in k[T][X]$ such that the fixed field of $M$ is generated by a root of $f$.
\item Include $f$ in the set $S$.
\item Include in $D$ all the $k$-roots of the discriminant of $f$.
\end{enumerate}
\item Return the sets $D$ and $S$.
\end{enumerate}
\end{alg}

\begin{thm}\label{algorithm_output_thm}
Let $P\in k[T][X]$ be a separable polynomial, and let $D$ and $S$ form the output of Algorithm \ref{hit_data_algorithm} with input $P$. Then the following hold for all $t\in k\setminus D$:
\begin{enumerate}
\item If  $\calF(P_t)\ne\calF(P)$, then $G_t\not\cong G$.
\item $G_t\not\cong G$ $\iff$ there exists $f\in S$ such that $f(t,X)$ has a root in $k$.
\end{enumerate}
\end{thm}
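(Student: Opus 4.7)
The plan is to observe that Theorem \ref{algorithm_output_thm} is essentially a translation of Theorem \ref{main_hit_thm} into the language of the algorithm's output, so the proof reduces to verifying that the algorithm produces exactly the data required to apply that theorem.

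First, I would unpack what Algorithm \ref{hit_data_algorithm} actually constructs. Step (4) computes the Galois group $G$ of $P$ over $k(T)$, and step (5) produces a set $M_1,\ldots,M_r$ of representatives for the conjugacy classes of maximal subgroups of $G$. For each $M_i$, step (6)(a) outputs a monic irreducible $f_i\in k[T][X]$ whose root generates the fixed field $F_i$ of $M_i$ over $k(T)$, and step (6)(b) places $f_i$ into $S$. Thus $S=\{f_1,\ldots,f_r\}$ is precisely a collection of the kind required in Theorem \ref{main_hit_thm}.

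Next I would examine the construction of $D$. By steps (2), (3), and (6)(c), the set $D$ consists of all $t\in k$ with $\ell(t)=0$, together with all $t\in k$ with $\Delta(t)=0$, together with all $t\in k$ with $\disc f_i(t,X)=0$ for some $i$. Equivalently, $D$ is exactly the set of $t\in k$ for which
\[
\Delta(t)\cdot\ell(t)\cdot\prod_{i=1}^r \disc f_i(t,X)=0.
\]
Therefore $t\in k\setminus D$ if and only if the nonvanishing hypothesis of Theorem \ref{main_hit_thm} is satisfied.

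With these two observations in hand, conclusions (1) and (2) of Theorem \ref{algorithm_output_thm} follow immediately by applying Theorem \ref{main_hit_thm} to any $t\in k\setminus D$: part (1) here is literally part (1) there, and part (2) here is part (2) there with $S=\{f_1,\ldots,f_r\}$. I do not anticipate a genuine obstacle in the write-up; the only point that requires any care is matching the algorithmic choice of the $f_i$ (which depends on a choice of primitive element in step (6)(a)) with the freedom allowed in the hypotheses of Theorem \ref{main_hit_thm}, but since that theorem permits \emph{any} monic irreducible $f_i\in k[T][X]$ whose root generates $F_i/k(T)$, any valid execution of the algorithm produces an admissible choice.
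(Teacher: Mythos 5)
Your proposal is correct and follows exactly the paper's route: the paper's proof is the one-line observation that the theorem is an immediate consequence of Theorem \ref{main_hit_thm}, and your write-up simply makes explicit the (correct) identification of the algorithm's output $D$ and $S$ with the data and nonvanishing hypothesis of that theorem.
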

\begin{proof}
This is an immediate consequence of Theorem \ref{main_hit_thm}.
\end{proof}

In the case $k=\Q$, all of the computational methods needed to carry out the steps of Algorithm \ref{hit_data_algorithm} are known, and most have been implemented in computer algebra systems. Indeed:

\begin{itemize}[leftmargin=6mm]
\item A permutation representation of $G$ can be computed by using an algorithm of Fieker-Kl\"uners \cite{fieker-kluners}.
\item A set of representatives for the conjugacy classes of maximal subgroups of $G$ can be obtained using an algorithm of Cannon-Holt \cite{cannon-holt}.
\item Given a subgroup $H\le G$, the minimal polynomial of a primitive element of the fixed field of $H$ can be found using a method discussed in \cite[\S 3.3]{kluners-malle}.
\end{itemize}

Most of the above algorithms have been implemented and are included in \textsc{Magma} \cite{magma}; the only exception is the computation of Galois groups of reducible polynomials over $\Q(T)$.  Hence, there is at present an obstacle to carrying out Algorithm \ref{hit_data_algorithm} with a reducible polynomial as input. However, this problem is being addressed in current work of Nicole Sutherland, and an implementation of the algorithm of Fieker-Kl\"uners for reducible polynomials will be included in a future release of \textsc{Magma}.

In view of the above discussion, it is currently possible to translate the problem of determining the exceptional set of an irreducible polynomial $P\in\Q[T,X]$ to a problem of determining the sets of rational points on a finite list of algebraic curves. The difficulty of the problem is therefore largely dependent on the genera of these curves; if the genera are not too large, it may be possible to obtain an explicit characterization of the set $\calE(P)$. For a survey of the presently available methods for computing rational points on curves, we refer the reader to Stoll's article \cite{stoll_survey}.

\section{Examples}\label{examples_section}
Having developed the theoretical and algorithmic material that form the core of this article, we proceed to apply our results to study the exceptional sets of two sample polynomials, one with an infinite exceptional set and one with a finite exceptional set. In order to carry out the necessary computations, an implementation of Algorithm \ref{hit_data_algorithm} in \textsc{Magma} will be used. The source code of our implementation is available in \cite{code}.

We include a cautionary remark for the reader who may be interested in reproducing our calculations. The method used by \textsc{Magma} to find primitive elements of fixed fields (which is needed in step 6(a) of Algorithm \ref{hit_data_algorithm}) does not always produce the same primitive element for a given field extension. Hence, the output of Algorithm \ref{hit_data_algorithm} that the reader obtains may be different from what is given here. However, in that case the arguments made below can be easily adapted to prove the same results.

\subsection{An infinite exceptional set}\label{serre_section} In \cite[\S4.5]{serre_topics} Serre shows that for even values of $n$, the polynomial 
\[P_n(T,X)=(n-1)X^n - nX^{n-1}+1+(-1)^{n/2}(n-1)T^2\]
has the alternating group $A_n$ as its Galois group. By HIT, most specializations $P_n(t,X)$ will have Galois group $A_n$ as well. In the case $n=4$ we obtain the polynomial
\[P(T,X)=3X^4-4X^3+1+3T^2\]
with Galois group $A_4$. We will now determine precisely which specializations of $P$ have Galois group different from $A_4$.

\begin{lem}\label{serre_ex_F1_lem}
Let $F_1(T,X) = X^4 + 4X^3 + 81T^2 + 27$ and let $t\in\Q^{\ast}$. Then the polynomial $F_1(t,X)$ has no rational root.
\end{lem}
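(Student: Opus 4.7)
The plan is to show a stronger statement: for $t\in\Q^{\ast}$ the polynomial $F_1(t,X)$ has no \emph{real} root, from which the lemma follows immediately. The idea is to rewrite the equation $F_1(t,x)=0$ as
\[
x^4+4x^3+27=-81t^2
\]
and observe that the two sides cannot both be zero when $t\ne 0$, while they have opposite signs otherwise.

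First I would compute the critical points of $g(x)=x^4+4x^3+27$. Since $g'(x)=4x^2(x+3)$, the only real critical points are $x=0$ and $x=-3$, with $g(-3)=0$. This suggests that $(x+3)^2$ divides $g(x)$, and carrying out the division (the one mechanical step) gives the factorization
\[
g(x)=(x+3)^2(x^2-2x+3)=(x+3)^2\bigl((x-1)^2+2\bigr).
\]
Both factors on the right are nonnegative on $\R$, and the second is strictly positive, so $g(x)\ge 0$ for all real $x$ with equality exactly at $x=-3$.

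Now if $x\in\R$ satisfies $F_1(t,x)=0$, then $g(x)=-81t^2\le 0$, which forces $g(x)=0$, hence $x=-3$, and then $-81t^2=0$, hence $t=0$. Contrapositively, for $t\in\Q^{\ast}$ the polynomial $F_1(t,X)$ has no real root, and in particular no rational root. There is no real obstacle here; the only nontrivial step is spotting the factorization $g(x)=(x+3)^2(x^2-2x+3)$, which is forced by the observation that $x=-3$ is a double real root of $g$.
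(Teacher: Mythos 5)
Your proof is correct, and it is worth comparing with the paper's. Both arguments ultimately rest on the same factorization $x^4+4x^3+27=(x+3)^2(x^2-2x+3)$, but they finish at different places of $\Q$. The paper substitutes $y=9t/(x+3)$ (after noting $x\ne -3$) to reduce to the conic $y^2=-(x^2-2x+3)$ and then cites a $2$-adic non-solvability check carried out by machine. You instead observe that $(x+3)^2\bigl((x-1)^2+2\bigr)\ge 0$ for all real $x$, so the equation $x^4+4x^3+27=-81t^2<0$ is already impossible over $\R$; this proves the stronger statement that $F_1(t,X)$ has no real root, needs no substitution or case split on $x=-3$, and replaces the computer-assisted local computation at $p=2$ with an elementary positivity argument at the archimedean place. (Indeed, your argument shows the paper's conic $y^2=-(x-1)^2-2$ has no real points either, so the appeal to $\Q_2$ there is not essential.) All of your individual steps check out: $g'(x)=4x^2(x+3)$, $g(-3)=0$, and the displayed factorization multiplies out correctly.
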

\begin{proof}
Suppose that there exists $x\in\Q$ such that $F_1(t,x)=0$. Since $t\ne 0$, we must have $x\ne-3$. Defining $y=9t/(x+3)$, the equation $F_1(t,x)=0$ implies that
\[y^2=-(x^2-2x+3).\]
However, a simple argument\footnote{The solvability of an equation of the form $y^2=f(x)$ over any given $p$-adic field can be tested using a method of Bruin \cite[\S 5.4]{bruin_local_solvability} which is implemented in the \textsc{Magma} function \texttt{HasPoint}.} shows that the above equation has no solution in $\Q_2$ and therefore no solution in $\Q$. This contradiction proves the lemma.
\end{proof}

\begin{lem}\label{serre_ex_F2_lem}
Let $F_2(T,X) = X^3 + 48X^2 + (336-1296T^2)X - 10368T^2 + 640$ and let $t\in\Q^{\ast}$. Then the polynomial $F_2(t,X)$ has a rational root if and only if $t$ has the form
\begin{equation}\label{serre_ex_eq}
t=\frac{v^3 - 9v}{9(1 - v^2)}
\end{equation}
for some rational number $v$.
\end{lem}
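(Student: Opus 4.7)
The plan is to find a convenient factorization of $F_2$, use it to exhibit a rational parametrization of the curve $F_2(T,X)=0$, and then match this parametrization with the claimed form.

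First I would rewrite $F_2(T,X)$ by grouping the terms involving $T^2$:
\[F_2(T,X) = X^3 + 48X^2 + 336X + 640 - 1296\,T^2(X+8).\]
A direct check shows that the cubic $X^3 + 48X^2 + 336X + 640$ factors as $(X+4)^2(X+40)$, so the defining equation $F_2=0$ takes the clean form
\[(X+4)^2(X+40) = 1296\,T^2(X+8).\]
This exposes a singular point at $(T,X)=(0,-4)$ and shows the curve is rational, which makes it reasonable to expect a parametrization by a single rational parameter.

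The factored equation suggests the substitution $V = 36T/(X+4)$, well-defined whenever $X \ne -4$. Squaring yields $V^2(X+8) = X+40$, and solving for $X$ gives
\[X = \frac{8(V^2-5)}{1-V^2}, \qquad X+4 = \frac{4(V^2-9)}{1-V^2}\]
(the case $V^2 = 1$ is impossible, as it would force $8 = 40$). Substituting back into $T = V(X+4)/36$ produces
\[T = \frac{V(V^2-9)}{9(1-V^2)} = \frac{V^3 - 9V}{9(1-V^2)},\]
which is precisely the claimed form.

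With the parametrization in hand, both directions of the lemma are routine. For the \emph{if} direction, given $v \in \Q$ with $v^2 \ne 1$, one defines $x = 8(v^2-5)/(1-v^2)$ and verifies via the factored equation that $F_2(t, x) = 0$ when $t = (v^3 - 9v)/(9(1-v^2))$. For the \emph{only if} direction, suppose $F_2(t, x) = 0$ with $t \in \Q^{\ast}$ and $x \in \Q$; the case $x = -4$ is excluded since it would force $t = 0$, so one sets $v = 36t/(x+4)$ and reverses the steps above. The main obstacle is really just spotting the factorization $X^3 + 48X^2 + 336X + 640 = (X+4)^2(X+40)$; once this is found, everything else is direct algebraic manipulation together with a short case analysis for the edge cases $v = \pm 1$ and $x = -4$.
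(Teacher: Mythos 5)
Your proof is correct and follows essentially the same route as the paper: both establish the lemma by exhibiting the rational parametrization $V\mapsto\bigl(\frac{V^3-9V}{9(1-V^2)},\,\frac{8(V^2-5)}{1-V^2}\bigr)$ of the curve $F_2(T,X)=0$ and inverting it, with the relevant edge cases ($v=\pm 1$, and the points forcing $t=0$) handled correctly. The difference is only presentational: the paper states the maps $\phi$ and $\psi$ outright and checks that they are inverses, whereas you derive them from the factorization $(X+4)^2(X+40)=1296\,T^2(X+8)$ via $V=36T/(X+4)$ --- which is the same inverse map, since $\frac{X^2-1296T^2+44X+160}{144T}=\frac{36T}{X+4}$ identically on the curve --- making the parametrization self-contained rather than computer-produced.
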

\begin{proof}
Let $C$ be the plane curve defined by the equation $F_2(T,X)=0$. The curve $C$ is parametrizable; indeed, the rational maps $\phi:C\dashedrightarrow\A^1=\Spec\Q[V]$ and $\psi:\A^1\dashedrightarrow C$ given by
\[\psi(V)=\left(\frac{V^3-9V}{9(1-V^2)},\frac{8(V^2-5)}{1-V^2}\right)\;\;\text{and}\;\;\phi(T,X)=\frac{X^2 - 1296T^2 + 44X + 160}{144T}\]
are easily seen to be inverses.

Suppose that $t$ is of the form \eqref{serre_ex_eq}. We may then define
\[x=\frac{8(v^2-5)}{1-v^2},\]
so that $\psi(v)=(t,x)$ is a rational point on $C$. Hence, the polynomial $F_2(t,X)$ has a rational root (namely $x$).

Conversely, suppose that $F_2(t,X)$ has a rational root, say $x$. Since $t\ne 0$, the map $\phi$ is defined at the point $(t,x)\in C(\Q)$. Thus, we may define $v=\phi(t,x)$. We claim that $v\ne\pm 1$. A straightforward calculation shows that the rational points on the pullback of $\pm 1$ under $\phi$ are $(0,-40)$ and $(0,-4)$. Since $t\ne 0$, the point $(t,x)$ is different from these two points. Hence $v=\phi(t,x)\ne\pm 1$, as claimed. The map $\psi$ is therefore defined at $v$, so $(t,x)=\psi(v)$. In particular, $t$ is of the form \eqref{serre_ex_eq}.
\end{proof}

\begin{prop}\label{serre_specializations_prop}
Let $t\in\Q$ and let $G_t$ be the Galois group of $P_t$. Then
\[G_t\not\cong A_4\iff t=\frac{v^3 - 9v}{9(1 - v^2)}\;\;\text{for some}\;v\in\Q.\]
\end{prop}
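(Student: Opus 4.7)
The plan is to apply Theorem \ref{algorithm_output_thm} by running Algorithm \ref{hit_data_algorithm} on $P(T,X) = 3X^4 - 4X^3 + 1 + 3T^2$, whose Galois group over $\Q(T)$ is $G = A_4$. The group $A_4$ has exactly two conjugacy classes of maximal subgroups: the Klein four-group $V_4$ of index $3$, and a Sylow $3$-subgroup $C_3$ of index $4$. Hence the algorithm produces two polynomials in $\Q[T][X]$ generating the fixed fields of $C_3$ and $V_4$ (of degrees $4$ and $3$ in $X$, respectively). Executing the algorithm in \textsc{Magma} (via the implementation in \cite{code}) realizes them as the polynomials $F_1$ and $F_2$ of Lemmas \ref{serre_ex_F1_lem} and \ref{serre_ex_F2_lem}, together with a finite exclusion set $D \subset \Q$.

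The next step is to verify that $D = \{0\}$. The leading coefficient $\ell = 3$ contributes no rational roots; and since $\partial P/\partial X = 12 X^2(X-1)$, any repeated root of $P(t,X)$ must lie at $X = 0$ or $X = 1$, so $\Delta(t) = 0$ forces $1 + 3t^2 = 0$ or $3t^2 = 0$, giving $t = 0$ as the only rational root of $\Delta$. An analogous analysis of $\disc F_1$ (using $\partial F_1/\partial X = 4X^2(X+3)$) and a direct computation for $\disc F_2$ yield the same conclusion. Thus for every $t \in \Q$ with $t \ne 0$, Theorem \ref{algorithm_output_thm}(2) reduces the proposition to
\[ G_t \not\cong A_4 \iff F_1(t,X) \text{ or } F_2(t,X) \text{ has a rational root,} \]
which is precisely the disjunction treated by the two supporting lemmas: Lemma \ref{serre_ex_F1_lem} eliminates the first alternative, while Lemma \ref{serre_ex_F2_lem} identifies the second with $t$ being of the form $(v^3 - 9v)/(9(1-v^2))$ for some $v \in \Q$. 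This settles the biconditional for all $t \ne 0$.

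It remains to handle $t = 0$, which lies in $D$ and so must be treated separately. The specialization $P_0(X) = 3X^4 - 4X^3 + 1 = (X-1)(3X^3 - X^2 - X - 1)$ is reducible, so certainly $G_0 \not\cong A_4$; simultaneously, $v = 0$ gives $t = 0$ in the parametrization, so both sides of the equivalence hold at $t = 0$. The main technical step in the whole argument is really the verification that $D = \{0\}$: had either $\disc F_1$ or $\disc F_2$ admitted additional rational roots, each such root would require an ad hoc determination of $G_t$. As this does not occur, the proof reduces cleanly to the algorithmic theorem plus the two preceding lemmas and a one-line check at $t = 0$.
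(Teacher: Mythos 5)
Your proof is correct and follows essentially the same route as the paper: run Algorithm \ref{hit_data_algorithm} to obtain $D=\{0\}$ and the polynomials $F_1,F_2$, invoke Theorem \ref{algorithm_output_thm}(2) together with Lemmas \ref{serre_ex_F1_lem} and \ref{serre_ex_F2_lem} for $t\ne 0$, and check $t=0$ by hand via the reducibility of $P_0$. Your explicit verification that $D=\{0\}$ via the roots of $\partial P/\partial X$ and $\partial F_1/\partial X$ is a welcome addition that the paper leaves implicit in the algorithm's output.
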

\begin{proof}
For $t=0$ the proposition holds because both statements in the above equivalence are true. Indeed, we have
\[P_0=3X^4-4X^3+1=(X-1)^2(3X^2+2X+1),\]
so $G_0$ has order 2. Suppose now that $t\ne 0$. 

Applying Algorithm \ref{hit_data_algorithm} to the polynomial $P$ we obtain the set $\{0\}$ and the polynomials
\begin{align*}
F_1(T,X) &= X^4 + 4X^3 + 81T^2 + 27,\\
F_2(T,X) &= X^3 + 48X^2 + (-1296T^2 + 336)X - 10368T^2 + 640.
\end{align*}

By Theorem \ref{algorithm_output_thm} and Lemmas \ref{serre_ex_F1_lem} and \ref{serre_ex_F2_lem}, we have the following:
\begin{align*}
G_t\not\cong A_4 &\iff F_1(t,X)\cdot F_2(t,X)\;\text{has a rational root}\\
&\iff F_2(t,X)\;\text{has a rational root}\\
&\iff t=\frac{v^3 - 9v}{9(1 - v^2)}\;\;\text{for some}\;v\in\Q.
\end{align*}
This completes the proof.
\end{proof}

\subsection{A finite exceptional set}\label{fermat_section} In our second example we consider the polynomial \[P(T,X)=X^6+T^6-1.\]
As follows from the case $n=3$ of Fermat's Last Theorem, the specialized polynomial $P_t$ has a rational root if and only if $t\in\{0,\pm 1\}$. We will now prove the following stronger result.

\begin{prop}\label{fermat_specializations_prop}
For $t\in\Q$, the polynomial $P_t$ is reducible if and only if $t\in\{0,\pm1\}$.
\end{prop}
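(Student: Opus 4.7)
The plan is to apply Algorithm \ref{hit_data_algorithm} to $P(T,X)=X^6+T^6-1$ and then analyze the resulting polynomials. Since the factorization $1-T^6=(1-T)(1+T)(1-T+T^2)(1+T+T^2)$ is squarefree, $1-T^6$ is neither a square nor a cube in $\Q(T)$, so the binomial $X^6-(1-T^6)=P$ is irreducible in $\Q(T)[X]$. Letting $\alpha$ denote a root of $P$, the unique quadratic subfield of $\Q(T,\alpha)$ is $\Q(T,\alpha^3)=\Q(T,\sqrt{1-T^6})$; this differs from $\Q(T,\sqrt{-3})$ because $-3(1-T^6)$ is not a square in $\Q(T)$ (its factorization has each irreducible factor to the first power). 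Hence $\zeta_6\notin\Q(T,\alpha)$, the splitting field $\Q(T,\alpha,\zeta_6)$ has degree $12$ over $\Q(T)$, and the Galois group $G$ of $P$ is the dihedral group $D_6$ of order $12$, acting on the six roots $\zeta_6^i\alpha$ by the standard rotation and reflection.

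The group $D_6$ has four conjugacy classes of maximal subgroups: the cyclic rotation subgroup $C_6$, two non-conjugate copies of $S_3$, and a single conjugacy class of Klein four-groups. Computing primitive elements for the four fixed fields, I expect the algorithm to return (up to the ambiguity discussed in \S\ref{examples_section})
\[
f_1=X^2+3,\quad f_2=X^2+T^6-1,\quad f_3=X^2-3T^6+3,\quad f_4=X^3+T^6-1,
\]
arising respectively from the primitive elements $\sqrt{-3}$, $\alpha^3$, $\alpha^3(\zeta_6-\zeta_6^{-1})$, and $\alpha^2$. Each $\disc(f_i)$ is either a nonzero constant or a nonzero constant multiple of a power of $T^6-1$, as are $\Delta(T)$ and $\ell(T)=1$, so the exceptional set produced by the algorithm is $D=\{-1,1\}$.

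By Theorem \ref{main_hit_thm}(1), for $t\in\Q\setminus D$ the reducibility of $P_t$ implies that some $f_i(t,X)$ has a rational root, so it suffices to show that no $f_i(t,X)$ acquires a rational root for $t\in\Q\setminus\{-1,0,1\}$. The case $i=1$ is clear. For $i=4$, a rational root corresponds to $1-t^6=u^3$ with $u\in\Q$; setting $v=t^2$ turns this into $u^3+v^3=1$, and Fermat's Last Theorem for exponent $3$ forces $(u,v)\in\{(1,0),(0,1)\}$, hence $t\in\{0,\pm1\}$. For $i\in\{2,3\}$, a rational root corresponds respectively to $1-t^6$ or $3(t^6-1)$ being a rational square; substituting $X=t^2$ gives a rational point on the elliptic curve $y^2=1-X^3$, which is $\Q$-isomorphic (via $X\mapsto -X$) to $E_1\colon y^2=x^3+1$, or on $y^2=3(X^3-1)$, which becomes $E_2\colon y^2=x^3-27$ after the rescaling $(X,y)\mapsto(3X,3y)$. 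Both $E_1$ and $E_2$ have rank zero over $\Q$, with torsion subgroups $\Z/6\Z$ and $\Z/2\Z$ respectively; enumerating their finitely many affine rational points and requiring the first coordinate to be a nonnegative rational square gives $t\in\{0,\pm1\}$ in the first case and $t\in\{\pm1\}$ in the second.

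Finally, the direct factorizations $P_0=(X-1)(X+1)(X^2+X+1)(X^2-X+1)$ and $P_{\pm 1}=X^6$ show that $P_t$ is reducible for $t\in\{0,\pm1\}$. The principal technical obstacle is establishing that both $E_1$ and $E_2$ have rank zero over $\Q$; this can be done by a standard 2-descent or read off from tables, since both curves are classical (both have $j=0$ and small conductor, and their Mordell--Weil groups are well-known).
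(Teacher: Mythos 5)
Your proposal is correct, and it reaches the same four-curve case analysis as the paper, but by a partly different route. You derive $G\cong D_6$ and the four subfield polynomials by hand from the primitive elements $\sqrt{-3}$, $\alpha^3$, $\alpha^3\sqrt{-3}$, $\alpha^2$, whereas the paper reads off (differently normalized but equivalent) polynomials $F_1,\dots,F_4$ from the \textsc{Magma} implementation; the paper explicitly anticipates this discrepancy, and your $f_1,f_2,f_3,f_4$ define the same fixed fields as the paper's $F_2,F_3,F_1,F_4$ respectively, so Theorem \ref{main_hit_thm} applies equally to your list. The substantive divergence is in the field $\Q(T,\sqrt{3(T^6-1)})$: the paper treats $v^2=3(t^6-1)$ as a genus-$2$ curve, computes that its Jacobian has Mordell--Weil rank $0$ by $2$-descent, and finishes with Chabauty, while you pass to the elliptic quotient via $u=t^2$ and land on $y^2=x^3-27$, needing only that this elliptic curve has rank $0$ (which is standard: it is $3$-isogenous to $36$a$1$, so its rank equals that of $y^2=x^3+1$, and its torsion is $\Z/2\Z$ with sole affine point $(3,0)$). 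This is more elementary and avoids genus-$2$ machinery; the paper's Chabauty route is more uniform with its software pipeline. For the cubic subfield you invoke Fermat for exponent $3$ via $u^3+v^3=1$ where the paper maps to $36$a$1$ again; these are essentially equivalent. Two trivial points to tighten: the chain of implications you want is reducibility of $P_t$ $\Rightarrow$ $\calF(P_t)\ne\calF(P)$ (using that $P$ is monic in $X$ and irreducible, so $\calF(P)=\{6\}$) $\Rightarrow$ $G_t\not\cong G$ by part (1) $\Rightarrow$ some $f_i(t,X)$ has a rational root by part (2), so you need both parts of Theorem \ref{main_hit_thm}, not just (1); and the rank-$0$ assertions you defer to tables are indeed correct, so nothing is missing.
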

\begin{proof}
Suppose that $P_t$ is reducible. We will show by contradiction that $t\in\{0,\pm1\}$. Thus, suppose that $t\notin\{0,\pm1\}$.

Applying Algorithm \ref{hit_data_algorithm} to the polynomial $P$ we obtain the set $\{-1,1\}$ and the polynomials
\begin{align*}
F_1(T,X) &= X^2 - 2^8\cdot 3^5\left((T - 1)(T + 1)(T^2 - T + 1)(T^2 + T + 1)\right)^3,\\
F_2(T,X) &= X^2 + 64\cdot 27\left((T - 1)(T + 1)(T^2 - T + 1)(T^2 + T + 1)\right)^2,\\
F_3(T,X) &= X^2 + 12X + 27 + 9T^6,\\
F_4(T,X) &= X^3 + 12X^2 + 48X + 72 - 8T^6.
\end{align*}

By Theorem \ref{algorithm_output_thm}, one of the polynomials $F_i(t,X)$ must have a rational root; we accordingly divide the proof into four cases.

\underline{\textit{Case 1:}} There exists $x\in\Q$ such that $F_1(t,x)=0$. Defining
\[v=x/\left(2^4\cdot 3^2\cdot(t - 1)(t + 1)(t^2 - t + 1)(t^2 + t + 1)\right),\]
the equation $F_1(t,x)=0$ implies that
\[v^2=3(t - 1)(t + 1)(t^2 - t + 1)(t^2 + t + 1).\]

The above equation defines a hyperelliptic curve $\calX$ of genus 2. By a descent argument one can show that the Jacobian variety of $\calX$ has a Mordell-Weil group of rank 0; it is therefore a straightforward calculation\footnote{Stoll's algorithm of 2-descent \cite{stoll_descent} is implemented in \textsc{Magma} and can be accessed via the \texttt{RankBound} function. Once the rank of the Jacobian is known to be 0, the \texttt{Chabauty0} function carries out the calculation of finding all the rational points on $\calX$.} to determine the set of rational points on $\calX$. We find that the only rational points are the Weierstrass points, namely $(\pm 1,0)$. It follows that $t=\pm 1$, which is a contradiction.

\underline{\textit{Case 2:}} There exists $x\in\Q$ such that $F_2(t,x)=0$. Letting
\[u=8\cdot 3\cdot(t - 1)(t + 1)(t^2 - t + 1)(t^2 + t + 1),\]
we have $u\ne 0$ and $x^2+3u^2=0$, which is clearly impossible. Thus we have a contradiction.

\underline{\textit{Case 3:}} There exists $x\in\Q$ such that $F_3(t,x)=0$. Defining
\[v=\frac{x+6}{3}\;\;\text{and}\;\; u=-t^2,\]
the equation $F_3(t,x)=0$ implies that
\[v^2=u^3+1.\]
The above equation defines the elliptic curve with Cremona label 36a1. This curve has rank 0 and a torsion subgroup of order 6; its only affine rational points are
\[(0,\pm 1), (2,\pm 3),\;\text{and}\;(-1,0).\] 
It follows from this that $u=0$, 2, or $-1$. This implies, respectively, that $t=0$, $t^2=-2$, or $t^2=1$, all of which lead to a contradiction. 

\underline{\textit{Case 4:}} There exists $x\in\Q$ such that $F_4(t,x)=0$. Letting $y=4t^3$, the equation $F_4(t,x)=0$ implies that
\[y^2=2(x^3+12x^2+48x+72).\]
The above equation defines the elliptic curve with Cremona label 36a1, the same curve that appeared in the previous case. Using the above model of the curve, the affine rational points are
\[(0,\pm 12), (-4,\pm 4),\; \text{and}\; (-6,0).\]
It follows that $y=\pm 12, \pm 4$, or 0, which implies that $t^3=\pm 3$, $t=\pm 1$, or $t=0$, all of which yield a contradiction.

Since every case has led to a contradiction, we conclude that $t\in\{0,\pm 1\}$. This completes the proof of  the proposition.
\end{proof}

\bibliography{ref_list}
\bibliographystyle{amsplain}
\end{document}